\documentclass[11pt]{article}

\usepackage[margin=1in]{geometry}
\usepackage{amsmath, amssymb, amsfonts}
\usepackage{amsthm}
\usepackage{hyperref}
\usepackage{authblk}

\title{Weak convergence rate for the Cox--Ingersoll--Ross process}

\author[1]{Alexandros Pavlis}
\author[1]{Umut \c{C}etin}

\affil[1]{Department of Statistics, London School of Economics and Political Science, 10 Houghton Street, London, WC2A 2AE, UK}

\date{}

\newtheorem{theorem}{Theorem}

\newtheorem{lemma}{Lemma}
\newtheorem{proposition}{Proposition}

\newcommand{\keywords}[1]{%
  \par\vspace{0.05in}
  \noindent\footnotesize{\bfseries Key words.} #1
  \par\vspace{0.1in}\normalsize
}

\newcommand{\msc}[1]{%
  \par\vspace{0.05in}
  \noindent\footnotesize{\bfseries AMS 2020 Subject Classification:} #1
  \par\vspace{0.1in}\normalsize
}

\makeatletter
\gdef\@subjectinfo{}
\newcommand{\subject}[1]{\gdef\@subjectinfo{#1}}
\providecommand{\address}[1]{}% affiliation already provided via \affil
\newcommand{\printfrontmatterinfo}{%
  \begingroup\footnotesize
  \ifx\@subjectinfo\@empty\else
    \par\noindent{\bfseries Subject.} \@subjectinfo\par\fi
  \endgroup}
\makeatother

\begin{document}
	
	%%%% Article title to be placed here
	% \title{...} and the author block are already set in the preamble via
	% authblk; the duplicate declarations below are commented out so authors
	% are not printed twice. The text is preserved for reference.
	%\title{Weak convergence rate for the Cox-Ingersoll-Ross process}
	%
	%\author{%%%% Author details
	%    A. Pavlis$^{1}$, U. \c{C}etin$^{1}$,
	%}
	%%%%%%%%% Insert author address here
	\address{$^{1}$ Department of Statistics, London School of Economics and Political Science, 10 Houghton st, London, WC2A 2AE, UK}
	
	%%%% Subject entries to be placed here %%%%
%	\subject{Stochastic Analysis, Numerical Analysis, Mathematical Finance}

\maketitle
\printfrontmatterinfo

%%%% Abstract text to be placed here %%%%%%%%%%%%
\begin{abstract}
We study the weak convergence rate of a drift-implicit discretisation scheme
for the Cox-Ingersoll-Ross (CIR) process in the regime where the process
remains strictly positive. Specifically, we consider scheme~(4) of
Alfonsi~\cite{A}, which arises naturally from applying a drift-implicit
Euler step to the SDE satisfied by the square root of the CIR process and
admits a unique positive closed-form solution at each time step. Using a
PDE approach combined with a continuous-time SDE representation of the
discretised process, we prove that the weak convergence rate is
$\mathcal{O}(1/N)$ under the Feller condition $2\alpha\geq\theta^2$ and
mild polynomial growth conditions on the payoff function. The proof
requires only elementary techniques and, in particular, avoids the
semi-exact simulation machinery used in earlier work. The methodology
is expected to extend to a broader class of diffusion processes.
\end{abstract}

\msc{93E20, 91G80, 49L20, 91A15}
\keywords{CIR model, Weak convergence, option pricing}

%%%%%%%%%%%%%%%%%%%%%%%%%%%

%%%%%%%%%% Insert the texts which can accomdate on firstpage in the tag "fmtext" %%%%%

\section{Introduction}

The Cox-Ingersoll-Ross (CIR) process is defined by the stochastic
differential equation
\begin{equation}\label{eq:CIR_eq}
  dX_t = k(\mu - X_t)\,dt + \theta\sqrt{X_t}\,dW_t,
  \quad X_0 = x,\quad t\in\mathbb{R}_{+},
\end{equation}
where $W = \{W_t\}_{t\geq 0}$ is a one-dimensional Brownian motion,
$k\geq 0$ is the speed of mean reversion, $\mu\geq 0$ is the long-run
mean, and $\theta > 0$ is the diffusion coefficient. We write
$\alpha := k\mu$ throughout. The process~\eqref{eq:CIR_eq} was
introduced by Cox, Ingersoll and Ross~\cite{CIR} as a model for
short-term interest rates. By Feller's test~\cite{KS}, if $X_0 > 0$ then
\begin{equation}
  \mathbb{P}(X_t > 0,\; t\geq 0) = 1
\end{equation}
provided the Feller condition $2\alpha \geq \theta^2$ holds.

The CIR process has since found widespread use beyond interest rate
modelling, notably as the variance process in the Heston stochastic
volatility model. At the core of most financial applications is the evaluation of
$\mathbb{E}[f(X_T)]$, where $f$ is a payoff function and $T > 0$ is a
fixed horizon. Although the transition density of $X$ is known
explicitly as a scaled non-central chi-squared distribution, the
expectation $\mathbb{E}[f(X_T)]$ rarely admits a closed form for
general~$f$, and one must resort to numerical approximation. Even
though the non-central chi-squared structure of the increments permits
exact simulation in principle~\cite{A}, this is computationally
demanding, and discretisation-based schemes are generally preferred in
practice.
The most natural discretisation is the Euler-Maruyama scheme, which
replaces the continuous dynamics by
\begin{equation}
  \hat{X}_{t_{n+1}} = \hat{X}_{t_n}
  + (\alpha - k\hat{X}_{t_n})\tfrac{T}{N}
  + \theta\sqrt{\hat{X}_{t_n}}\,(W_{t_{n+1}} - W_{t_n}),
\end{equation}
where $t_n = nT/N$. A fundamental difficulty is that this scheme does
not preserve positivity: because the Gaussian increment can be
arbitrarily negative, $\hat{X}_{t_{n+1}}$ can become negative with
positive probability, rendering $\sqrt{\hat{X}_{t_{n+1}}}$ undefined at
the next step. Several remedies have been proposed in the literature,
including taking the absolute value or the positive part of the scheme
\cite{DD,BD}, but the most natural resolution is to use an implicit
scheme, which preserves positivity by construction.

Alfonsi~\cite{A} introduced four implicit discretisation
schemes for the CIR process, each of which guarantees strict positivity
of the iterates. Among these, scheme~(3) of~\cite{A}, also proposed
independently by Brigo and Alfonsi~\cite{BA} in the context of credit
modelling, reads
\begin{equation}\label{eq:BEMscheme3}
	\hat{X}_{t_{n+1}}=	\hat{X}_{t_{n}}+\left(\alpha-{\theta^2\over 2}-k\hat{X}_{t_{n+1}}\right)(t-t_n)+\theta\sqrt{\hat{X}_{t_{n+1}}}(W_t-W_{t_n}).
\end{equation}
Scheme~(4) of~\cite{A}, which is the subject of the present paper,
arises from applying a drift-implicit Euler step to the
SDE satisfied by $\sqrt{X_t}$:
\begin{equation} \label{eq:BEMscheme}
	\sqrt{\hat{X}_t}=\sqrt{\hat{X}_{t_n}}+{\alpha-\theta^2/4\over 2\sqrt{\hat{X}_t}}\,(t-t_n)-{k\over 2}\sqrt{\hat{X}_t}\,(t-t_n)+{\theta\over 2}(W_t-W_{t_n}),\quad t\in(t_n,t_{n+1}].
\end{equation}
Squaring both sides, $\hat{X}_t$ is the unique positive root of the
quadratic equation
\begin{equation} \label{eq:myscheme}
	[2+k(t-t_n)]\hat{X}_t-\left[\theta(W_t-W_{t_n})+2\sqrt{\hat{X}_{t_n}}\right]\sqrt{\hat{X}_t}-(\alpha-\frac{\theta^2}{4})(t-t_n)=0,
\end{equation}
which has a unique positive root whenever $4\alpha>\theta^2$~\cite{A}.
Scheme~\eqref{eq:myscheme} is arguably more natural to implement, as it
does not rely on the special structure of the SDE governing the CIR
process (see p.~357 of~\cite{A}) that is used by Alfonsi to arrive
at~\eqref{eq:BEMscheme3}.

\paragraph{Literature on strong convergence}
Strong convergence of discretisation schemes for the CIR process has
been studied extensively. Deelstra and Delbaen~\cite{DD} and Bossy and
Diop~\cite{BD} established strong convergence results for variants of
implicit schemes. Alfonsi~\cite{A} proved strong convergence of order~$1$
for schemes~(3) and~(4) under the Feller condition. Neuenkirch and
Szpruch~\cite{NS} and Dereich, Neuenkirch and Szpruch~\cite{DNS}
subsequently proved first-order strong convergence for a related class
of drift-implicit methods under optimal conditions.

\paragraph{Literature on weak convergence}
Weak convergence is arguably the more relevant notion for derivative
pricing, since one is interested in the accuracy of
$\mathbb{E}[f(\hat{X}_T)]$ as an approximation to $\mathbb{E}[f(X_T)]$.
For SDEs with globally Lipschitz coefficients, the classical work of
Talay and Tubaro established a weak convergence rate of $\mathcal{O}(1/N)$
for the Euler-Maruyama scheme; see also Milstein~\cite{MGN}. For the CIR
process, the non-Lipschitz nature of the square-root diffusion coefficient
makes these classical results inapplicable. Alfonsi~\cite{A} studied the weak convergence of his schemes under a hypothesis~(HW) on the discretisations, and showed that, conditional on~(HW) holding, a weak convergence rate of $\mathcal{O}(1/N)$ follows. However, (HW) does not hold for the more natural discretisation
scheme~\eqref{eq:myscheme}, while it is valid for~\eqref{eq:BEMscheme3}.

\paragraph{Contribution of the present paper}
The purpose of this article is to prove rigorously that the weak
convergence rate of scheme~\eqref{eq:myscheme}, i.e.\ scheme~(4)
of~\cite{A}, is $\mathcal{O}(1/N)$ under the Feller condition
$2\alpha\geq\theta^2$ and a mild polynomial growth assumption on~$f$.
To the best of our knowledge, this is the first rigorous proof of the
optimal weak convergence rate for this scheme. We highlight three
distinguishing features of our work relative to the existing literature.

First, we work directly with scheme~\eqref{eq:myscheme}, which is arguably the
simplest of Alfonsi's implicit schemes to implement in practice.

Second, the payoff regularity we require is mild: we need only
$f\in\mathcal{C}^{(2)}((0,\infty),\mathbb{R})$ with $|f''(x)|\leq K(1+x^m)$
for some $m\geq 2$, whereas~\cite{A} requires four continuous derivatives
with polynomial growth.

Third, the methodology itself is elementary and general. Following
\c{C}etin and Hok~\cite{CH}, the key step is to derive a continuous-time
It\^{o} SDE satisfied by the interpolated discretised process
$\{\hat{X}_t\}_{t\geq 0}$ (Lemma~\ref{lemma1}). This representation
makes the structure of the discretisation error transparent and reduces
the weak convergence analysis to the estimation of two integral
quantities, $I^n_1$ and $I^n_2$, each of order $\mathcal{O}(1/N^2)$,
via a PDE argument (Proposition~\ref{proposition1}) and moment bounds on
$\hat{X}$ (Lemma~\ref{lemma2}). The moment bounds are obtained via a
comparison theorem argument combined with a time-change. We expect this
methodology to extend to a broader class of diffusions with non-Lipschitz
coefficients, which is left to subsequent work.

The remainder of the paper is organised as follows.
Section~\ref{sec:weak} states and proves the three main auxiliary results
(Lemma~\ref{lemma1}, Lemma~\ref{lemma2} and
Proposition~\ref{proposition1}) and then establishes the main convergence
theorem (Theorem~\ref{T1.1}). Appendix~A collects the detailed
calculations supporting the bounds on $I^n_1$ and $I^n_2$.

\section{Weak Convergence of the CIR model}\label{sec:weak}

Following the work of~\cite{CH}, we start by deriving an appropriate stochastic differential equation for the implicit discretised process $\lbrace\hat{X}_t\rbrace_{t\geq 0}$.

\begin{lemma}\label{lemma1}
	Consider the implicit scheme defined by~\eqref{eq:BEMscheme} with $2\alpha\geq\theta^2$. Then
	\begin{equation}
		\label{eqn:invscheme}
		d\hat{X}_t={\sigma(\hat{X}_t)\over F(\hat{X}_t,t;t_n)}dW_t+{\sigma^2(\hat{X}_t)\over F^2(\hat{X}_t,t;t_n)}\left[g(\hat{X}_t,t;t_n)+{b(\hat{X}_t)\over \sigma^2(\hat{X}_t)}\right]dt,\quad t\in(t_n,t_{n+1}],
	\end{equation}
	where $\sigma:\mathbb{R}_{+}\to\mathbb{R}_{+}$, $\sigma(x)=\theta\sqrt{x}$, $b:\mathbb{R}_{+}\to\mathbb{R}$, $b(x)=\alpha-kx$, and
	\begin{align}
		& F(x,t;t_n)=1+{k\over 2}(t-t_n)+{(4\alpha-\theta^2)(t-t_n)\over 8x}, \nonumber \\
		&g(x,t;t_n)=-{k(t-t_n)\over 2\theta^2}+{1\over 2x}-{2(2+k(t-t_n))\over 4x(2+2k(t-t_n))+(4\alpha-\theta^2)(t-t_n)} \nonumber \\
        &~~~~~~~~~~~~~~+{(4\alpha-\theta^2)^2(t-t_n)\over 32\theta^2 x^2}. \nonumber
	\end{align}
	Moreover, 

\[
\frac{1}{F(x,t;t_n)}\le 1,~
|g(x,t;t_n)|\le K\left(1+\frac{1}{x}+\frac{t-t_n}{x^2}\right),~
\sup_{t\le T,\,N} g(x,t;t_n)\le K\left(\frac{1}{x}+\frac{1}{x^2}\right),
\]
\[
\inf_{x\ge0} g(x,t;t_n)
=
-\frac{k}{2\theta^2}(t-t_n)
\ge
-\frac{kT}{2\theta^2}.
\]
where $K>0$ is a constant depending only on $\alpha$, $\theta$ and $T$.
\end{lemma}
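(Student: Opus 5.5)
The plan is to work with the square root $Y_t:=\sqrt{\hat X_t}$, write it as a smooth function of a Brownian motion with drift, apply It\^o's formula twice, and then identify the coefficients algebraically.

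\textbf{Step 1: represent $Y_t$ through an inverse function.} Fix $n$ and $t\in(t_n,t_{n+1}]$, and put $s:=t-t_n$. Set
\[
\phi_s(y):=y\Bigl(1+\tfrac{k}{2}s\Bigr)-\frac{(4\alpha-\theta^2)s}{8y},\qquad y>0 .
\]
Then \eqref{eq:BEMscheme} reads $\phi_s(Y_t)=Z_t$, where $Z_t:=Y_{t_n}+\tfrac{\theta}{2}(W_t-W_{t_n})$. The condition $2\alpha\ge\theta^2$ gives $4\alpha>\theta^2$. Hence $\phi_s$ is strictly increasing from $-\infty$ to $+\infty$ on $(0,\infty)$, with
\[
\partial_y\phi_s(y)=1+\tfrac{k}{2}s+\frac{(4\alpha-\theta^2)s}{8y^2}=F(y^2,t;t_n).
\]
Its inverse $\psi(s,z)$ is therefore smooth in $(s,z)$, and $Y_t=\psi(s,Z_t)$. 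By the inverse function theorem:
\begin{itemize}
\item $\psi_z=1/F$;
\item $\psi_s=-\partial_s\phi_s(y)/F$, where $\partial_s\phi_s(y)=\tfrac{k}{2}y-\tfrac{(4\alpha-\theta^2)}{8y}$;
\item $\psi_{zz}=-\phi_s''/F^3$, where $\phi_s''(y)=-\tfrac{(4\alpha-\theta^2)s}{4y^3}$.
\end{itemize}

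\textbf{Step 2: It\^o's formula for $Y$ and then for $\hat X=Y^2$.} It\^o's formula gives
\[
dY_t=\frac{\theta}{2F}\,dW_t+\Bigl(\psi_s+\frac{\theta^2}{8}\psi_{zz}\Bigr)dt .
\]
Applying It\^o again to $\hat X_t=Y_t^2$ gives $d\hat X_t=2Y_t\,dY_t+\frac{\theta^2}{4F^2}\,dt$. The martingale part is $\theta\sqrt{\hat X_t}/F\,dW_t=\sigma(\hat X_t)/F\,dW_t$, as claimed.

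For the drift, I will substitute $y^2=x$ and multiply through by $F^2/\sigma^2=F^2/(\theta^2x)$. It then remains to check that the result equals $g+b/\sigma^2$. This is the main (purely algebraic) obstacle. The denominator $4x(2+2ks)+(4\alpha-\theta^2)s=8xF$ comes from $F$ itself. The terms with $(4\alpha-\theta^2)^2$ and $1/x^2$ come from cross products in $F\cdot\partial_s\phi$ and from $\phi''$. I would organise the computation by writing everything over the common factor $8xF$ and comparing coefficients of powers of $x$.

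\textbf{Step 3: the bounds.} Since $4\alpha>\theta^2$, $k\ge0$ and $x>0$, we have $F\ge1$, so $1/F\le1$. For $g$, the key observation concerns the middle fraction. Since $4x(2+2ks)+(4\alpha-\theta^2)s\ge 8x(1+ks)$ and $(2+ks)/(1+ks)\le 2$,
\[
0\le\frac{2(2+ks)}{4x(2+2ks)+(4\alpha-\theta^2)s}\le\frac{2+ks}{4x(1+ks)}\le\frac{1}{2x}.
\]
Consequently:
\begin{itemize}
\item $g+\tfrac{ks}{2\theta^2}$ is a sum of nonnegative terms, and it tends to $0$ as $x\to\infty$. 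This gives $\inf_x g=-\tfrac{ks}{2\theta^2}\ge-\tfrac{kT}{2\theta^2}$.
\item Dropping the negative terms gives $g\le\frac{1}{2x}+\frac{(4\alpha-\theta^2)^2T}{32\theta^2x^2}$, which is the uniform upper bound $K(1/x+1/x^2)$.
\item The triangle inequality applied to the four terms gives $|g|\le K(1+1/x+s/x^2)$, using $s\le T$.
\end{itemize}
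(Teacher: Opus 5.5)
\textbf{Verdict: your approach matches the paper's, but the algebraic step you defer contains a false identity, and doing it shows the stated $g$ is wrong for $k>0$.}

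Your route is the paper's route in different coordinates. The paper sets $H=2\phi_s(\sqrt{x})$ and applies It\^o's formula to $H^{-1}$ directly in $x$. You invert $\phi_s$ in $y=\sqrt{x}$ and apply It\^o twice. Your Step~2 coefficients are correct. In Step~3, your infimum argument (the $x$-dependent part of $g$ is nonnegative and vanishes as $x\to\infty$) cleanly replaces the paper's sign check on $\partial_x g$.

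\textbf{The gap.} It sits in the part you defer as purely algebraic, and it is not just an omission. The one identity you assert there is false: $8xF=4x(2+ks)+(4\alpha-\theta^2)s$, and this equals $4x(2+2ks)+(4\alpha-\theta^2)s$ only when $k=0$. Carrying out your own computation with $c:=4\alpha-\theta^2$, the drift of $\hat X$ is
\[
D=\frac{c/4-kx}{F}+\frac{\theta^2cs}{16xF^3}+\frac{\theta^2}{4F^2}.
\]
Using $F(c/4-kx)=c/4-kx-\tfrac{k^2sx}{2}+\tfrac{c^2s}{32x}$ and $c/4+\theta^2/4=\alpha$, you get
\[
g=\frac{F^2D-b(x)}{\theta^2x}=-\frac{k^2s}{2\theta^2}+\frac{c^2s}{32\theta^2x^2}+\frac{cs}{16x^2F}
=-\frac{k^2s}{2\theta^2}+\frac{1}{2x}-\frac{2(2+ks)}{4x(2+ks)+cs}+\frac{c^2s}{32\theta^2x^2}.
\]
This differs from the displayed $g$ in two places:
\begin{itemize}
\item the first term has $k^2$, not $k$;
\item the denominator has $2+k(t-t_n)$, not $2+2k(t-t_n)$.
\end{itemize}
So for $k>0$ your step ``check that the result equals $g+b/\sigma^2$'' cannot succeed. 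The statement's formula contains typos, and the paper's proof, which also just invokes ``direct computation'', does not detect them. You need to actually perform the computation and state the corrected $g$, rather than claim to verify the given one.

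\textbf{Consequences for Step~3.} Your bounds go through essentially verbatim for the corrected $g$. Since $4x(2+ks)+cs\ge 4x(2+ks)$, you still have $0\le\frac{2(2+ks)}{4x(2+ks)+cs}\le\frac{1}{2x}$. Only two things change: the infimum becomes $-k^2s/(2\theta^2)\ge -k^2T/(2\theta^2)$, and $K$ depends on $k$ as well.

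\textbf{A minor point on smoothness.} At $s=0$, $\phi_0(y)=y$ maps onto $(0,\infty)$ only, so $\psi$ is smooth on $(0,\infty)\times\mathbb{R}$ and near $\{0\}\times(0,\infty)$, not on all of $[0,\infty)\times\mathbb{R}$. This suffices because $Z_{t_n}=\sqrt{\hat X_{t_n}}>0$. Also, since $\psi>0$ automatically, your version needs no localisation near $\hat X=0$.
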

\begin{proof}
	Define $H:\mathbb{R}_{+}\times\mathbb{R}_{+}\to\mathbb{R}$ by $H(x,t;t_n):=(2+k(t-t_n))\sqrt{x}-(\alpha-\theta^2/4){(t-t_n)\over\sqrt{x}}$.

	Equation~\eqref{eq:BEMscheme} implies that
	\begin{equation}
		H(\hat{X}_t,t;\hat{X}_{t_n},t_n)=2\sqrt{\hat{X}_{t_n}}+\theta(W_t-W_{t_n}),
	\end{equation}
	so that
	\begin{equation}
		\hat{X}_t=H^{-1}\!\left(2\sqrt{\hat{X}_{t_n}}+\theta(W_t-W_{t_n})\right),
	\end{equation}
	where $H^{-1}$ denotes the inverse mapping of $H$ in its first argument.

	Note that we cannot directly apply Ito's formula since $H$ is not $\mathcal{C}^{2,1}(\mathbb{R}_{+}\times\mathbb{R}_{+})$. To this end, consider the stopping time $\tau_M:=\inf\lbrace t\geq 0\mid\hat{X}_t\leq {1\over M}\rbrace$ and localise the corresponding stochastic process:
	\begin{align}
		\hat{X}_{t\wedge\tau_M}&=X_0-\int_0^{t\wedge\tau_M}{\partial_s H(\hat{X}_s,s;t_n)\over\partial_xH(\hat{X}_s,s;t_n)}\,ds
		+\int_0^{t\wedge\tau_M}{\theta\over\partial_x H(\hat{X}_s,s;t_n)}\,dW_s \nonumber \\
		&\quad-\int_0^{t\wedge\tau_M}\theta^2{\partial_{xx} H(\hat{X}_s,s;t_n)\over 2(\partial_x H(\hat{X}_s,s;t_n))^2}\,ds.
	\end{align}
	Since the implicit scheme~\eqref{eq:myscheme} has a unique positive solution, $\lim_{M\to\infty}\tau_M=\infty$ $\mathbb{P}$-a.s., so Ito's formula applies on $[0,T]$. A simple rearrangement gives the claimed form
	\begin{equation}
		d\hat{X}_t={\sigma(\hat{X}_t)\over F(\hat{X}_t,t;t_n)}dW_t+{\sigma^2(\hat{X}_t)\over F^2(\hat{X}_t,t;t_n)}\left[g(\hat{X}_t,t;t_n)+{b(\hat{X}_t)\over\sigma^2(\hat{X}_t)}\right]dt,
	\end{equation}
	with
	\begin{align}
		&g(x,t;t_n):=-{1\over 2}{\partial_{xx} H(x,t;t_n)\over\partial_x H(x,t;t_n)}-{1\over\theta^2}\partial_x H(x,t;t_n)\,\partial_t H(x,t;t_n)-{b(x)\over\sigma^2(x)}, \nonumber \\
		&F(x,t;t_n):=\sqrt{x}\,\partial_x H(x,t;t_n).
	\end{align}

	Direct computation yields the expressions for $g$ and $F$ stated in the lemma. The triangle inequality and $4\alpha>\theta^2$ give the claimed bounds on $g$ and $F$.

	Finally, we compute
%	\begin{equation}
%		\partial_x g(x,t;t_n)=\frac{8\left(k(t-t_n)+2\right)^2}{\left(4\alpha(t-t_n)-(t-t_n)\theta^2+4(t-t_n)kx+8x\right)^2}-\frac{(t-t_n)\left(\theta^2-4\alpha\right)^2}{16\theta^2 x^3}-\frac{1}{2x^2}.
%	\end{equation}
    \begin{equation}
\partial_x g(x,t;t_n)
=
\frac{8\bigl(k(t-t_n)+2\bigr)^2}
     {\bigl((t-t_n)(4\alpha-\theta^2+4kx)+8x\bigr)^2}
-\frac{(t-t_n)(\theta^2-4\alpha)^2}
      {16\theta^2x^3}
-\frac{1}{2x^2}.
\end{equation}
	Elementary inequalities give $\partial_x g(x,t;t_n)\leq 0$, so $\inf_{x\geq 0}g(x,t;t_n)=-{k\over 2\theta^2}(t-t_n)\geq -{k\over 2\theta^2}T$.
\end{proof}

The next lemma establishes that certain (inverse) moments of the discretised process are finite. This is crucial for controlling the estimates that govern the weak convergence rate of $\lbrace\hat{X}_t\rbrace_{t\leq T}$ towards the original process $\lbrace X_t\rbrace_{t\geq 0}$. Additionally, it is shown that the discretised process $\lbrace\hat{X}_t\rbrace_{t\leq T}$ does not escape to infinity in the long run -- another property shared with the original process. This further justifies the scheme as a suitable numerical approximation of the CIR process.

\begin{lemma}\label{lemma2}
	Let $\hat{X}_t$ be the process defined by~\eqref{eq:BEMscheme} and $2\alpha\geq\theta^2$. If $0\leq p<{2\alpha\over\theta^2}$ then
	\begin{equation}
		\sup_{t\leq T,N}\mathbb{E}^x\left[\int_{t_n}^t{1\over\hat{X}^{p}_s}{1\over F^2(\hat{X}_s,s;t_n)}ds\right]\leq K(t-t_n)
	\end{equation}
\end{lemma}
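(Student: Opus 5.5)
The statement is equivalent to the uniform bound $\sup_{s\le T,N}\mathbb{E}^x[\hat X_s^{-p}F^{-2}(\hat X_s,s;t_n)]\le K$. By Fubini, integrating this bound over $s\in[t_n,t]$ gives exactly $K(t-t_n)$. Since $1/F\le 1$ by Lemma~\ref{lemma1}, it would even suffice to bound $\sup_{s\le T,N}\mathbb{E}^x[\hat X_s^{-p}]$. The paper, however, advertises a comparison argument combined with a time change, so I would work with the time-changed process. The factor $F^{-2}$ is precisely the quotient of the squared diffusion coefficients of $\hat X$ and of the true CIR process.

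\textbf{Time change.} Starting from the SDE \eqref{eqn:invscheme} of Lemma~\ref{lemma1}, set
\[
A_t=\int_0^t F^{-2}(\hat X_s,s;t_{n(s)})\,ds,\qquad t\le T.
\]
Then $A_t\le t$ and $A$ is strictly increasing. Let $C$ be its inverse and $Y_u=\hat X_{C_u}$. By the Dambins--Dubins--Schwarz time-change theorem,
\[
dY_u=\sigma(Y_u)\,dB_u+\bigl[\sigma^2(Y_u)\,g(Y_u,C_u;\cdot)+b(Y_u)\bigr]\,du
\]
for a Brownian motion $B$. By Lemma~\ref{lemma1}, $g\ge -kT/(2\theta^2)$. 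The drift is therefore bounded below by
\[
\alpha-kx-\frac{kT}{2}x=\alpha-k'x,\qquad k'=k\Bigl(1+\frac{T}{2}\Bigr).
\]
The comparison theorem (Yamada--Watanabe type, valid for the $\sqrt{x}$ diffusion coefficient since it is $1/2$-Hölder) then gives $Y_u\ge Z_u$ a.s. Here $Z$ is the CIR process $dZ=(\alpha-k'Z)\,du+\theta\sqrt{Z}\,dB$ with $Z_0=x$. Consequently $Y_u^{-p}\le Z_u^{-p}$.

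\textbf{Change of variables back.} Using $u=A_s$ and $du=F^{-2}\,ds$,
\[
\int_{t_n}^t \hat X_s^{-p}F^{-2}\,ds=\int_{A_{t_n}}^{A_t}Y_u^{-p}\,du\le\int_{A_{t_n}}^{A_t}Z_u^{-p}\,du.
\]
It is classical that for $p<2\alpha/\theta^2$ the CIR process has $\sup_{u\le T}\mathbb{E}[Z_u^{-p}]<\infty$. This follows from the explicit noncentral $\chi^2$ density, whose behaviour at $0$ is $z^{2\alpha/\theta^2-1}$.

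\textbf{Main obstacle.} The integration limits $A_{t_n},A_t$ are random and depend on the path. One cannot simply take expectations to get $K(A_t-A_{t_n})\le K(t-t_n)$, because $A_t-A_{t_n}$ is correlated with $Z$. I would handle this with H\"older's inequality. Choose $p<q<2\alpha/\theta^2$ and write
\[
\int_{A_{t_n}}^{A_t}Z^{-p}\,du\le (A_t-A_{t_n})^{1-p/q}\Bigl(\int_{A_{t_n}}^{A_t}Z^{-q}\,du\Bigr)^{p/q}.
\]
This alone still loses a power of $(t-t_n)$. The better route is to bound $\sup_{u\le T}Z_u^{-p}$, or rather to work on the enlarged window $[A_{t_n},A_{t_n}+(t-t_n)]$, which contains $[A_{t_n},A_t]$ since $A_t-A_{t_n}\le t-t_n$. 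The remaining task is to estimate
\[
\mathbb{E}\int_{A_{t_n}}^{A_{t_n}+h}Z_u^{-p}\,du,\qquad h=t-t_n.
\]
Here $A_{t_n}$ is a stopping time for the time-changed filtration. By the strong Markov property of $Z$ and the bound $\sup_{z>0,\,u\le T}\mathbb{E}^z[Z_u^{-p}]\le K$, this quantity is at most $Kh$. The uniformity in $z$ holds because smaller starting points only increase $Z^{-p}$ to a bounded extent: the inverse moments of CIR started at $z\ge 0$ are bounded on compact time intervals when $p<2\alpha/\theta^2$. Verifying this uniform bound and the stopping-time measurability is the step I expect to require the most care.
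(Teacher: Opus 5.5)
Your set-up is the paper's: time change by $A_t=\int_0^tF^{-2}\,ds$, then comparison with a CIR process with mean reversion $k(1+T/2)$. You are right that the random, path-dependent limits $A_{t_n},A_t$ are the real issue. The paper passes over this point by invoking $\sup_{t\le T}\mathbb{E}[Y_t^{-p}]<\infty$, Fubini and $A_t\le t$. Your repair, however, rests on a false estimate. The bound $\sup_{z>0,\,u\le T}\mathbb{E}^z[Z_u^{-p}]\le K$ fails for every $p>0$: at $u=0$ the quantity equals $z^{-p}$. More precisely, $\mathbb{E}^z[Z_u^{-p}]$ is of order $(z+u)^{-p}$ for $u\le T$; Jensen gives the lower bound and the noncentral $\chi^2$ representation the upper one. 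Even the time-integrated quantity is not $O(h)$ uniformly in the starting point. By monotone convergence in $z$, $\int_0^h\mathbb{E}^z[Z_u^{-p}]\,du\to\int_0^h\mathbb{E}^0[Z_u^{-p}]\,du$ as $z\downarrow 0$. Since $\mathbb{E}^0[Z_u^{-p}]\asymp u^{-p}$, this limit is of order $h^{1-p}$ for $p<1$ and infinite for $p\ge 1$. Hence the strong Markov step only yields
\[
\mathbb{E}\int_{A_{t_n}}^{A_{t_n}+h}Z_u^{-p}\,du\le C\,h\,\mathbb{E}\bigl[Z_{A_{t_n}}^{-p}\bigr].
\]
You are then left needing a bound, uniform in $N$ and $n$, on an inverse moment of $Z$ at the random time $A_{t_n}$. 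That is the original difficulty in another guise.

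A bound of this kind is available by soft arguments only in part of the range. Since $\mathcal{L}^Z x^{-p}=p\,x^{-p-1}\bigl(\tfrac{\theta^2(p+1)}{2}-\alpha\bigr)+pk'x^{-p}$, for $p\le 2\alpha/\theta^2-1$ the process $e^{-pk'u}Z_u^{-p}$ is a positive local supermartingale, hence a supermartingale. Optional stopping then gives $\mathbb{E}[Z_\tau^{-p}]\le e^{2pk'T}x^{-p}$ for every stopping time $\tau\le 2T$. Applying this with $\tau=A_{t_n}+u$ and integrating over $u\in[0,h]$ closes your argument, with no need for the strong Markov property. For $2\alpha/\theta^2-1<p<2\alpha/\theta^2$, however, no bound uniform over stopping times exists. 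For the hitting time $\tau_\varepsilon$ of level $\varepsilon$ one has $\mathbb{E}[Z_{\tau_\varepsilon\wedge T}^{-p}]\ge\varepsilon^{-p}\,\mathbb{P}(\tau_\varepsilon<T)$. By the scale function, $\mathbb{P}(\tau_\varepsilon<T)$ decays only like $\varepsilon^{2\alpha/\theta^2-1}$ (logarithmically if $2\alpha=\theta^2$), so this lower bound blows up. In that window you need an ingredient specific to the scheme, for instance a uniform-in-$N$ bound on $\mathbb{E}[\hat X_{t_n}^{-p}]$ at the grid points. With it, restarting the comparison at each $t_n$ from $\hat X_{t_n}$ and using $\mathbb{E}^z[Z_u^{-p}]\le Cz^{-p}$ would give the claim. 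This window is not a technicality: the appendix uses the lemma with $p=1$. That value lies in the window whenever $\theta^2/2<\alpha<\theta^2$, which is exactly the regime the paper wants to cover without assuming $\alpha>\theta^2$.
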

for some constant $K>0$. Moreover, $\forall m\geq 0$
\begin{equation}
	\sup_{t\leq T,N}\mathbb{E}^x[\hat{X}^m_t]<\infty,
\end{equation}
and $\mathbb{P}^x(\hat{X}_t<\infty\text{ and }\hat{X}_t>0\text{ for all }t>0)=1$ for all $x>0$.
\begin{proof}
	From~\eqref{eqn:invscheme},
	\begin{equation}
		d\hat{X}_t={\sigma(\hat{X}_t)\over F(\hat{X}_t,t;t_n)}dW_t+{\sigma^2(\hat{X}_t)\over F^2(\hat{X}_t,t;t_n)}\left[g(\hat{X}_t,t;t_n)+{\alpha/\theta^2\over\hat{X}_t}-{k\over\theta^2}\right]dt.
	\end{equation}
	Consider the process $\hat{Y}$ defined by $\hat{Y}_t=\hat{X}_{A^{-1}_t}$, where
	\begin{equation}
		dA_t={1\over F^2(\hat{X}_t,t;t_n)}\,dt,
	\end{equation}
	and $A^{-1}_t:=\lbrace s\geq 0\mid\int_0^s{1\over F^2(\hat{X}_u,u;t_n)}du=t\rbrace$ is the stopping time at which $A$ reaches $t$, with $A_t$ well defined and finite since by~\hyperref[lemma1]{Lemma~\ref*{lemma1}} we have ${1\over F(\hat{X}_t,t;t_n)}\leq 1$, which gives $A_t\leq t$ $\mathbb{P}$-a.s.

	Consequently, the Dambis-Dubins-Schwarz theorem ( e.g.\ Theorem~V.1.6 in~\cite{RY}) yields
	\begin{equation}
		d\hat{Y}_t=\theta\sqrt{\hat{Y}_t}\,dB_t+\theta^2\left[\left(g(\hat{Y}_t,A^{-1}_t;A^{-1}_{t_n})-{k\over\theta^2}\right)\hat{Y}_t+{\alpha\over\theta^2}\right]dt,\quad t\in(A^{-1}_{t_n},A^{-1}_{t_{n+1}}],
	\end{equation}
	where $B$ is a standard Brownian motion adapted to the filtration $(\mathcal{F}_{A^{-1}_t})_{t\geq 0}$.

	Define $Y$ as the CIR process started at $X_0$, i.e.,
	\begin{equation}
		Y_t=X_0+\int_0^t\theta\sqrt{Y_s}\,dB_s+\int_0^t(\alpha-cY_s)\,ds,\quad c=k+{kT\over 2}.
	\end{equation}
	Since $g(x,t;t_n)\geq\inf_x g(x,t;t_n)\geq -{kT\over 2\theta^2}$ by~\hyperref[lemma1]{Lemma~\ref*{lemma1}}, the generalised comparison theorem for stochastic differential equations (see Theorem~2.10 in~\cite{CD}) yields
	\begin{equation}
		\mathbb{P}(\hat{Y}_t\geq Y_t,~t\leq T)=1.
	\end{equation}
	An immediate consequence is
	\begin{equation}\label{auaxeq1}
		\sup_{t\leq T,N}\mathbb{E}^x\left[\int_{t_n}^t{1\over\hat{X}^p_s}{1\over F^2(\hat{X}_s,s;t_n)}ds\right]\leq\sup_{t\leq T}\mathbb{E}^x\left[\int_{A_{t_n}}^{A_t}{1\over Y^p_s}ds\right].
	\end{equation}
	The first claim now follows from the fact that $\sup_{t\leq T}\mathbb{E}[{1\over Y^p_t}]<\infty$ whenever $p<{2\alpha\over\theta^2}$~, as shown by Eq.(3.1) in \cite{DNS}, combined with Fubini's theorem and $A_t\leq t$ $\mathbb{P}$-a.s.

%using ideas from~\cite{HK}

	Next, we prove the last claim. It has already been shown that $\hat{X}_t>0$ $\mathbb{P}$-a.s.\ for all $t\in[0,T]$ when $X_0>0$. Thus, we are left to show that explosions do not occur for the scheme~\eqref{eq:BEMscheme}. By~\hyperref[lemma1]{Lemma~\ref*{lemma1}}, $g$ is bounded above by $\sup_{t\leq T,N}g(x,t;t_n)\leq K\left({1\over x}+{1\over x^2}\right)$, where $K>0$ depends only on $\alpha$, $\theta$ and $T$. Hence the total drift of $\hat{X}$ is bounded above by $K(1+{1\over x})+b(x)$, which is locally Lipschitz.

	To establish the desired finiteness, we introduce the auxiliary process $\{Z_t\}_{t\geq 0}$ starting from $X_0$ and solving
	\begin{equation}
		dZ_t=\left[K\!\left(1+\frac{1}{Z_t}\right)+\alpha-kZ_t\right]dt+\theta\sqrt{Z_t}\,dB_t,
	\end{equation}
	where $B$ is a Brownian motion. Note that $Z$ is not a CIR process; the additional $K(1+1/x)$ term in the drift, with its $1/x$ singularity, makes $Z$ a generalised Bessel-type process. By the comparison theorem~\cite{KS,CD}, since the drift of $\hat{Y}$ is bounded above by that of $Z$, we have
	\begin{equation}
		\mathbb{P}(\hat{Y}_t\leq Z_t,~\forall t\in\mathbb{R}_{+})=1.
	\end{equation}
	It therefore suffices to show that $+\infty$ is inaccessible for $Z$. The scale function of $Z$ is $s(x)=\exp\!\left(-2\int_c^x\frac{d(\xi)}{\beta^2(\xi)}d\xi\right)$, where $d(x)=K(1+1/x)+\alpha-kx$ and $\beta^2(x)=\theta^2 x$. A direct computation gives
	\begin{equation}
		-2\int\frac{d(x)}{\theta^2 x}\,dx=-\frac{2(K+\alpha)}{\theta^2}\ln x+\frac{2K}{\theta^2 x}+\frac{2k}{\theta^2}x+\text{const},
	\end{equation}
	so that
	\begin{equation}
		s(x)\propto x^{-{2(K+\alpha)/\theta^2}}\exp\!\left(\frac{2K}{\theta^2 x}\right)\exp\!\left(\frac{2kx}{\theta^2}\right).
	\end{equation}
	As $x\to+\infty$, the factor $\exp(2kx/\theta^2)$ dominates and $s(x)\to+\infty$, so $+\infty$ is an inaccessible boundary for $Z$~\cite{KS}. As $x\to 0^+$, the factor $\exp(2K/(\theta^2 x))\to+\infty$, so $Z$ is recurrent. This proves the final claim. For the moment bounds, define the stopping time $\zeta_M:=\inf\lbrace t\geq 0\mid\hat{X}_t>M\rbrace$. From~\eqref{eqn:invscheme} we obtain

\begin{align}
\mathbb{E}^x\!\left[\hat{X}_{t\wedge\zeta_M}\right]
&=
\mathbb{E}^x\!\Bigg[
\int_{t_n}^{t\wedge\zeta_M}
\frac{\sigma(\hat{X}_s)}
     {F(\hat{X}_s,s;t_n)}
\,dW_s
\nonumber\\
&\qquad
+
\int_{t_n}^{t\wedge\zeta_M}
\frac{\sigma^2(\hat{X}_s)}
     {F^2(\hat{X}_s,s;t_n)}
\left(
g(\hat{X}_s,s;t_n)
+\frac{b(\hat{X}_s)}
       {\sigma^2(\hat{X}_s)}
\right)
\,ds
\Bigg].
\end{align}
	The sequence $(\zeta_M)$ is localising, reducing the local martingale to a true martingale. Using the first claim of the lemma, we obtain
	\begin{equation}
		\sup_{t\leq T,N}\mathbb{E}^x[\hat{X}_{t\wedge\zeta_M}]\leq K\!\left[1+\int_{t_n}^t\mathbb{E}^x[\hat{X}_{s\wedge\zeta_M}]\,ds\right],
	\end{equation}
	where $K>0$ is a constant. Since explosions do not occur, $\lim_{M\to\infty}\zeta_M=\infty$ $\mathbb{P}$-a.s., so applying Fatou's lemma on the left-hand side, monotone convergence on the right-hand side, and Gr\"{o}nwall's inequality gives
	\begin{equation}
		\sup_{t\leq T,N}\mathbb{E}^x[\hat{X}_t]<\infty.
	\end{equation}

	Suppose now that
	\begin{equation}
		E(m):=\sup_{t\leq T,N}\mathbb{E}^x[\hat{X}^m_t]<\infty,\quad m\geq 1.
	\end{equation}
	By Ito's formula,
	\begin{align}
		\hat{X}^{m+1}_t&=\hat{X}^{m+1}_{t_n}+(m+1)\int_{t_n}^t\hat{X}^{m}_s{\sigma(\hat{X}_s)\over F(\hat{X}_s,s;t_n)}dW_s\nonumber \\
		&\quad+(m+1)\int_{t_n}^t\hat{X}^{m}_s{\sigma^2(\hat{X}_s)\over F^2(\hat{X}_s,s;t_n)}\left(g(\hat{X}_s,s;t_n)-{k\over\theta^2}+{\alpha/\theta^2\over\hat{X}_s}\right)ds\nonumber \\
		&\quad+{m(m+1)\over 2}\int_{t_n}^t\hat{X}^{m-1}_s{\sigma^2(\hat{X}_s)\over F^2(\hat{X}_s,s;t_n)}ds,\quad t\in(t_n,t_{n+1}].
	\end{align}
	Taking expectations for the stopped process and using the bounds on $g$ and $F$ gives
	\begin{equation}\label{eq:lemma2.eq2}
		\mathbb{E}[\hat{X}^{m+1}_{t\wedge\zeta_M}]\leq\mathbb{E}[\hat{X}^{m+1}_{t_n\wedge\zeta_M}]+K\int_{t_n}^{t\wedge\zeta_M}\mathbb{E}[\hat{X}^{m}_s]\,ds+K\int_{t_n}^{t\wedge\zeta_M}\mathbb{E}[\hat{X}^{m-1}_s]\,ds,
	\end{equation}
	where $K>0$ depends only on the parameters and $T$, and the local martingale term vanishes by localisation. Applying Fatou's lemma and monotone convergence on the left- and right-hand sides respectively, using $x^{m-1}\leq 1+x^m$ and~\eqref{eq:lemma2.eq2} recursively, gives
	\begin{equation}
		E(m+1)\leq\hat{X}^{m+1}_0+KE(m)<\infty.
	\end{equation}
	Thus $\sup_{t\leq T,N}\mathbb{E}^x[\hat{X}_t^m]<\infty$ for $m=1$ implies the statement for all $m\geq 1$.
\end{proof}

Next, we follow a standard PDE approach to estimate the error of the numerical approximation. We start with the following result of Alfonsi~\cite{A}.

\begin{proposition}\label{proposition1}
	Let $f\in\mathcal{C}^{(q)}((0,\infty),\mathbb{R})$ and $m\geq q$, $m\in\mathbb{N}$, be such that there exists $K>0$ with
	\begin{equation}
		|f^{(q)}(x)|\leq K(1+x^m)\quad\forall x\geq 0.
	\end{equation}
	Then the function $v:\mathbb{R}_{+}\times[0,T]\to\mathbb{R}$ defined by $v(x,t):=\mathbb{E}^x[f(X_{T-t})]$ satisfies the PDE
	\begin{equation}
		\partial_t v+b(x)\partial_x v+{\sigma(x)^2\over 2}\partial_{xx}v=0,\quad v(T,x)=f(x).
	\end{equation}
	Moreover, the derivatives of $v$ are uniformly bounded, and there exists $K>0$ such that
	\begin{equation}
		\left|\partial^{k}_x\partial^{r}_t v(x,t)\right|\leq K(1+x^{m+q+r})\quad\forall(x,t)\in\mathbb{R}_{+}\times[0,T],\quad k+2r\leq q.
	\end{equation}
\end{proposition}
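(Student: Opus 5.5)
The plan is to differentiate the flow of the CIR process and obtain derivative bounds through Feynman--Kac representations of $\partial_x^k v$, rather than treating the PDE analytically.

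\textbf{Step 1: the PDE.} Since $v(x,t)=\mathbb{E}^x[f(X_{T-t})]$ and the CIR transition density is explicit (a scaled noncentral $\chi^2$), $v$ is smooth in $(x,t)$ for $x>0$, $t<T$. It solves the backward Kolmogorov equation by the Markov property and It\^o's formula. The standard check is that $v(X_s,s)$ is a martingale, using the polynomial moment bounds $\sup_{t\le T}\mathbb{E}^x[X_t^m]\le K(1+x^m)$ of the CIR process.

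\textbf{Step 2: spatial derivatives.} I would use the classical affine/Bessel structure. Differentiating $v$ in $x$ once gives
\[
\partial_x v(x,t)=\mathbb{E}^x\!\left[e^{-k(T-t)}f'(\tilde X_{T-t})\right],
\]
where $\tilde X$ is a CIR process with the same $k,\theta$ and drift parameter $\alpha+\theta^2/2$. This follows by differentiating the PDE in $x$: $u=\partial_x v$ satisfies $\partial_t u+(b(x)+\tfrac{\theta^2}{2})\partial_x u+\tfrac{\theta^2 x}{2}\partial_{xx}u-ku=0$, which is again of CIR type. Iterating, $\partial_x^j v$ is represented via $f^{(j)}$ and a CIR process with drift $\alpha+j\theta^2/2$, multiplied by $e^{-jk(T-t)}$. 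This is essentially Alfonsi's argument (Proposition 4.1 in \cite{A}). To justify each differentiation rigorously, I would verify the Feynman--Kac representation for the differentiated PDE. Uniqueness in the polynomially growing class follows from the boundary at $0$ being non-attracting for the shifted process, together with the moment bounds. Combined with $|f^{(j)}(x)|\le K(1+x^m)$ and the CIR moment bound, this yields $|\partial_x^j v|\le K(1+x^{m})$ for $j\le q$. For $j<q$, integrating $f^{(q)}$ gives growth $x^{m+q-j}$, which is still within the claimed $x^{m+q}$.

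\textbf{Step 3: time derivatives.} For $r\ge1$, use the PDE to replace each $\partial_t$ by $-(b\partial_x+\tfrac{\sigma^2}{2}\partial_{xx})$. Each application costs two $x$-derivatives and multiplies by a factor growing at most linearly in $x$. This gives $|\partial_x^k\partial_t^r v|\le K(1+x^{m+q+r})$ whenever $k+2r\le q$.

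The main obstacle I expect is Step 2's rigorous justification. The claim that $\partial_x^j v$ equals the shifted Feynman--Kac expectation requires either differentiating under the expectation or a uniqueness result for a degenerate PDE at $x=0$. I would first try the direct route: represent $X$ as a noncentral $\chi^2$ and differentiate the explicit density series term by term, where dominated convergence is easy. Only if that becomes messy would I fall back on a uniqueness argument via localisation at $\tau_M$.
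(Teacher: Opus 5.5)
The paper gives no proof of its own here and simply imports the result from Alfonsi~\cite{A}. Your plan is essentially Alfonsi's argument and is correct: the dimension-shift identity $\partial_x v(x,t)=e^{-k(T-t)}\mathbb{E}^x[f'(\tilde X_{T-t})]$, with $\tilde X$ a CIR process with drift parameter $\alpha+\theta^2/2$, iterated up to $f^{(q)}$, and then each $\partial_t$ traded for $-(b\partial_x+\frac{\sigma^2}{2}\partial_{xx})$ through the PDE. It even gives the sharper growth $K(1+x^{m+q-k})$ for $\partial_x^k\partial_t^r v$.

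Your primary justification is the right one to pursue. Differentiating the Poisson--Gamma mixture form of the noncentral $\chi^2$ law term by term, and using $\mathbb{E}[h(G_{a+1})]-\mathbb{E}[h(G_a)]=\mathbb{E}[h'(G_{a+1})]$ for $G_a\sim\mathrm{Gamma}(a,1)$, yields the identity directly. It also gives continuity of $\partial_x^j v$ up to $t=T$, which the uniqueness fallback would otherwise have to presuppose.
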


We are now ready to state and prove the main result.

\begin{theorem} \label{T1.1}
	Let $2\alpha\geq\theta^2$ and $f\in\mathcal{C}^{(2)}((0,\infty),\mathbb{R})$ be such that $|f^{(2)}(x)|\leq K(1+x^m)$ for some $m\geq 2$. Then
	\begin{equation}
		|\mathbb{E}^x[f(X_T)]-\mathbb{E}^x[f(\hat{X}_T)]|\sim\mathcal{O}\!\left({1\over N}\right).
	\end{equation}
\end{theorem}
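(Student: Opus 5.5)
The plan is the standard telescoping PDE argument, driven by the continuous-time SDE representation of Lemma~\ref{lemma1}. Let $v(x,t)=\mathbb{E}^x[f(X_{T-t})]$ be the function of Proposition~\ref{proposition1}, so that $\mathbb{E}^x[f(X_T)]=v(x,0)$ and $\mathbb{E}^x[f(\hat X_T)]=\mathbb{E}^x[v(\hat X_T,T)]$. We write
\[
\mathbb{E}^x[f(\hat X_T)]-\mathbb{E}^x[f(X_T)]=\sum_{n=0}^{N-1}\mathbb{E}^x\bigl[v(\hat X_{t_{n+1}},t_{n+1})-v(\hat X_{t_n},t_n)\bigr].
\]
On each interval $(t_n,t_{n+1}]$ we apply It\^o's formula to $v(\hat X_t,t)$ using \eqref{eqn:invscheme}. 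The stochastic integral is removed by localisation with $\tau_M\wedge\zeta_M$, then passed to the limit using the moment bounds of Lemma~\ref{lemma2} and the polynomial growth of $\partial_x v$. We then subtract the PDE $\partial_t v+b\partial_x v+\tfrac12\sigma^2\partial_{xx}v=0$.

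After this subtraction, each step's contribution is $\mathbb{E}\int_{t_n}^{t_{n+1}}[I_1+I_2]\,ds$, where
\[
I_1=\Bigl(\frac{1}{F^2}-1\Bigr)\Bigl(b\,\partial_xv+\tfrac12\sigma^2\partial_{xx}v\Bigr),\qquad I_2=\frac{\sigma^2 g}{F^2}\,\partial_x v,
\]
all evaluated at $(\hat X_s,s;t_n)$. Summing over $N$ steps, it suffices to show that $I^n_1:=\mathbb{E}\int_{t_n}^{t_{n+1}}I_1\,ds$ and $I^n_2:=\mathbb{E}\int_{t_n}^{t_{n+1}}I_2\,ds$ are each $\mathcal{O}(1/N^2)$. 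Proposition~\ref{proposition1} is used with $q=2$, so $v$, $\partial_xv$ and $\partial_{xx}v$ grow at most polynomially.

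For $I_1$ we use the explicit form of $F$ from Lemma~\ref{lemma1}. Since $F\ge1$, we have $|1-F^{-2}|\le 2(F-1)/F^2$, and
\[
F-1=\frac{k}{2}(s-t_n)+\frac{(4\alpha-\theta^2)(s-t_n)}{8x}.
\]
Hence
\[
|I_1|\le K(s-t_n)\Bigl(1+\frac1{\hat X_s}\Bigr)\frac{1}{F^2}\,(1+\hat X_s^{m'})\,\bigl(1+\hat X_s\bigr)
\]
for some $m'$. The positive powers are handled by Cauchy--Schwarz and $\sup\mathbb{E}\hat X^m<\infty$. The $1/\hat X$ factor is paired with $1/F^2$ and controlled by the first claim of Lemma~\ref{lemma2} with $p=1<2\alpha/\theta^2$; the Feller condition gives $2\alpha/\theta^2\ge1$. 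Strict inequality is needed here, so the boundary case $2\alpha=\theta^2$ must be handled separately. I would try to absorb it by noting that $(s-t_n)/(xF)\le 8/(4\alpha-\theta^2)$ is bounded, which gives the $(s-t_n)$ factor without any inverse moment. Integrating the factor $(s-t_n)$ yields $\mathcal{O}(1/N^2)$.

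The main obstacle is $I_2$. By Lemma~\ref{lemma1}, $g$ itself is not small: it contains the term $\tfrac1{2x}-\tfrac{2(2+k(s-t_n))}{4x(2+2k(s-t_n))+(4\alpha-\theta^2)(s-t_n)}$, which vanishes at $s=t_n$. The plan is to combine these two terms over a common denominator and show they are $\mathcal{O}\bigl((s-t_n)(1/x+1/x^2)\bigr)$. The remaining terms of $g$ are already linear in $(s-t_n)$, with sizes $(s-t_n)$ and $(s-t_n)/x^2$. Multiplying by $\sigma^2=\theta^2 x$ gives
\[
|\sigma^2 g|\le K(s-t_n)\Bigl(1+x+\frac1x\Bigr),
\]
and the $1/x$ piece is absorbed by $1/F^2$ exactly as in $I_1$. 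Indeed, $x F\ge c(s-t_n)$ lets one trade $(s-t_n)/x$ against $F$, so the inverse moment is only needed up to $p\le1$. Since $\partial_xv$ grows polynomially, Lemma~\ref{lemma2} and H\"older's inequality give $|I^n_2|\le K\int_{t_n}^{t_{n+1}}(s-t_n)\,ds=\mathcal{O}(1/N^2)$. The delicate point is making this cancellation in $g$ precise uniformly in $x$ near $0$, which is where the detailed appendix computations would go. Summing the $N$ bounds of order $1/N^2$ gives the claimed $\mathcal{O}(1/N)$ rate.
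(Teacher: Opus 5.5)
Your reduction to $I^n_1,I^n_2$ matches the paper, and your treatment of $I_2$ is sound. It is in fact more direct than the paper's. Since $4x(2+k(s-t_n))+(4\alpha-\theta^2)(s-t_n)=8xF$, one has exactly $\tilde g=\frac{(4\alpha-\theta^2)(s-t_n)}{16x^2F}$. Hence $|\sigma^2 g|\le K(s-t_n)(x+1/x)$, and Lemma~\ref{lemma2} with $p=1$ handles $I_2$ without a second It\^o expansion.

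The gap is in $I_1$. The inequality $|1-F^{-2}|\le 2(F-1)/F^2$ is false for every $F>1$ (at $F=3$ it reads $8/9\le 4/9$). The exact identity is
\[
1-\frac{1}{F^2}=\frac{2(F-1)}{F^2}+\frac{(F-1)^2}{F^2},
\]
and the term you dropped is at least $\frac{(4\alpha-\theta^2)^2(s-t_n)^2}{64\,x^2F^2}$. Under the weight $F^{-2}$ this is an $x^{-2}$ singularity. Lemma~\ref{lemma2} controls that only if $2<2\alpha/\theta^2$, i.e.\ $\alpha>\theta^2$. If instead you spend one factor via $(s-t_n)/(xF)\le 8/(4\alpha-\theta^2)$, you are left with $(s-t_n)/(xF)$, whose weight is only $F^{-1}$. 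Lemma~\ref{lemma2} comes from the time change $dA_t=F^{-2}dt$; it says nothing about $F^{-1}$-weighted inverse moments, nor about $\mathbb{E}[1/\hat X_s]$.

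The obstruction is structural. On $\{\hat X_s\lesssim s-t_n\}$, $\Psi=1-F^{-2}$ is of order one, not of order $s-t_n$, and this is exactly where the weight $F^{-2}$ degenerates. Your boundary-case remedy fails for the same reason: bounding $(s-t_n)/(xF)$ by a constant consumes the factor $s-t_n$, leaving an $O(1)$ integrand and no rate. (The paper's own estimates also invoke Lemma~\ref{lemma2} with $p=1$, so $2\alpha=\theta^2$ gets no separate treatment there either.)

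The missing idea is the paper's second It\^o expansion. Since $\Psi(x,t_n;t_n)=0$, apply It\^o's formula to $P(\hat X_t)\Psi(\hat X_t,t;t_n)$ on $[t_n,t]$, where $P$ is the polynomial dominating $|\partial_t v|$. Every resulting $ds$-integrand carries the weight $F^{-2}$, either from the drift and quadratic variation of $\hat X$ or from $\partial_s\Psi=2\partial_sF/F^3$. The terms in which $\Psi$ is differentiated in $x$ carry surplus powers: $\partial_x\Psi=-\frac{(4\alpha-\theta^2)(s-t_n)}{4x^2F^3}$ is multiplied by a further $F^{-2}$ from the dynamics of $\hat X$. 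These surplus powers let you absorb the extra factors $(s-t_n)/x$ through $(s-t_n)/(xF)\le 8/(4\alpha-\theta^2)$ while keeping $F^{-2}$, so only inverse moments with $p\le 1$ are needed. The factor $s-t_n$ that your pointwise bound tried to extract is then supplied by the outer double integral $\int_{t_n}^{t_{n+1}}\int_{t_n}^{t}\cdots\,ds\,dt$, which gives $\mathcal{O}(1/N^2)$ per step.
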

\begin{proof}
	We use the telescoping identity
	\begin{equation}
		\mathbb{E}^x[f(\hat{X}_T)]-\mathbb{E}^x[f(X_T)]=\mathbb{E}^{x}[v(T,\hat{X}_T)]-v(0,X_0)=\sum_{n=0}^{N-1}\mathbb{E}^x[v(t_{n+1},\hat{X}_{t_{n+1}})-v(t_n,\hat{X}_{t_n})].
	\end{equation}
	By It\^{o}'s formula,
	\begin{equation}
		v(t_{n+1},\hat{X}_{t_{n+1}})-v(t_n,\hat{X}_{t_n})=M_{t_{n+1}}-M_{t_n}+I^n_1+I^n_2,
	\end{equation}
	where
	\begin{align}
		I^n_1&:=\int_{t_n}^{t_{n+1}}\partial_t v(\hat{X}_t,t)\left(1-{1\over F^2(\hat{X}_t,t;t_n)}\right)dt, \\
		I^n_2&:=\int_{t_n}^{t_{n+1}}\partial_x v(\hat{X}_t,t)\,g(\hat{X}_t,t;t_n){\sigma(\hat{X}_t)^2\over F^2(\hat{X}_t,t;t_n)}\,dt, \\
		M_t&:=\int_0^t\partial_x v(\hat{X}_t,t){\sigma(\hat{X}_t)\over F(\hat{X}_t,t;t_n)}\,dW_t,
	\end{align}
	with $M$ a \textit{local} martingale. Before proceeding, we introduce the auxiliary polynomials
	\begin{equation}
		Q(x):=1+x^{m+2}\quad\text{and}\quad P(x):=1+x^{m+3}.
	\end{equation}

	We first show that $M$ is a \textit{true} martingale. Indeed,
	\begin{equation}
		\mathbb{E}^x[\langle M\rangle_u]=\mathbb{E}^x\!\left[\int_0^u\left(\partial_x v(\hat{X}_t,t)\right)^2{\sigma(\hat{X}_t)^2\over F^2(\hat{X}_t,t;t_n)}dt\right]\leq\theta^2\int_0^u\sup_{t\leq T}\mathbb{E}^x\!\left[\left(\partial_x v(\hat{X}_t,t)\right)^2\hat{X}_t\right]dt,
	\end{equation}
	which is finite since $|\partial_x v(x,t)|\leq Q(x)$ and~\hyperref[lemma2]{Lemma~\ref*{lemma2}} bounds the right-hand side. Hence $\mathbb{E}^x[M_{t_{n+1}}-M_{t_n}]=0$. It follows that
	\begin{equation}\label{eq:Th1.1}
		|\mathbb{E}^x[f(X_T)]-\mathbb{E}^x[f(\hat{X}_T)]|=\left|\sum_{n=0}^{N-1}\mathbb{E}^x[I^n_1+I^n_2]\right|\leq\sum_{n=0}^{N-1}\!\left(\mathbb{E}^x[|I^n_1|]+\mathbb{E}^x[|I^n_2|]\right).
	\end{equation}
	Hence, we need to estimate $\mathbb{E}^x[|I^n_1|]$ and $\mathbb{E}^x[|I^n_2|]$.

	The strategy for bounding $I^n_j$, $j=1,2$, is the following. By a further application of It\^{o}'s formula we shall show that $I^n_j\sim\mathcal{O}(1/N^2)$, which gives rise to double integrals of the form
	\begin{equation}
		\mathbb{E}^x\!\left[\int_{t_n}^{t_{n+1}}\!\int_{t_n}^t\mathcal{S}(\hat{X}_s,s;t_n){1\over F^2(\hat{X}_s,s;t_n)}ds\,dt\right].
	\end{equation}
	If $\mathcal{S}(x,t;t_n)$ does not diverge faster than $x^{-2\alpha/\theta^2}$ as $x\to 0^+$, then Fubini's theorem and the first part of~\hyperref[lemma2]{Lemma~\ref*{lemma2}} give the required scaling in $N$. If $\mathcal{S}(x,t;t_n)$ has polynomial growth, the second part of the same lemma combined with Fubini's theorem provides the same bound. Furthermore,~\hyperref[proposition1]{Proposition~\ref*{proposition1}} gives $|\partial_x v(x,t)|\leq Q(x)$ and $|\partial_t v(x,t)|\leq P(x)$ for all $(x,t)\in\mathbb{R}_{+}\times[0,T]$, so we only need to control the terms coming from the structure of $\hat{X}$.

	We now estimate $I^n_1$. Using $\sup_{t\leq T}|\partial_t v(x,t)|\leq P(x)$ and applying It\^{o}'s formula gives
	\begin{align} \label{eqn:I1}
		&\left|\int_{t_n}^{t_{n+1}}\partial_t v(\hat{X}_t,t)\left(1-{1\over F^2(\hat{X}_t,t;t_n)}\right)dt\right|\leq\int_{t_n}^{t_{n+1}}P(\hat{X}_t)\left(1-{1\over F^2(\hat{X}_t,t;t_n)}\right)dt \nonumber\\
		&=\int_{t_n}^{t_{n+1}}\Bigg[\int_{t_n}^t P(\hat{X}_s)\partial_s\Psi(\hat{X}_s,s;t_n)\,ds +\int_{t_n}^t P(\hat{X}_s)\partial_x\Psi(\hat{X}_s,s;t_n)\,d\hat{X}_s\nonumber\\
        &+\int_{t_n}^t\partial_x P(\hat{X}_s)\Psi(\hat{X}_s,s;t_n)\,d\hat{X}_s+\int_{t_n}^t\partial_{xx}P(\hat{X}_s)\Psi(\hat{X}_s,s;t_n){\sigma^2(\hat{X}_s)\over 2F^2(\hat{X}_s,s;t_n)}ds\nonumber\\
        &+\int_{t_n}^t P(\hat{X}_s)\partial_{xx}\Psi(\hat{X}_s,s;t_n){\sigma^2(\hat{X}_s)\over 2F^2(\hat{X}_s,s;t_n)}ds+\int_{t_n}^t\partial_x P(\hat{X}_s)\partial_x\Psi(\hat{X}_s,s;t_n){\sigma^2(\hat{X}_s)\over F^2(\hat{X}_s,s;t_n)}ds\Bigg]dt, \nonumber
	\end{align}
	where we used $F^2(x,t_n;t_n)=1$ and defined $\Psi(x,t;t_n):=1-{1\over F^2(x,t;t_n)}$.

	To illustrate how~\hyperref[lemma2]{Lemma~\ref*{lemma2}} controls these estimates, consider the fourth term on the right-hand side. Since ${1\over F^2(x,t;t_n)}\leq 1$, we have $|\Psi(x,t;t_n)|\leq 1$, and thus
    \begin{align}
&\mathbb{E}^x\!\Bigg[
\int_{t_n}^{t_{n+1}}
\int_{t_n}^{t}
\left|
\partial_{xx}P(\hat X_s)
\Psi(\hat X_s,s;t_n)
\frac{\sigma(\hat X_s)^2}
     {2F^2(\hat X_s,s;t_n)}
\right|
\,ds\,dt
\Bigg]
\nonumber\\
&\le
\frac{\theta^2}{2}
\int_{t_n}^{t_{n+1}}
\int_{t_n}^{t}
\mathbb{E}^x\!\left[
\partial_{xx}P(\hat X_s)\hat X_s
\right]
\,ds\,dt\le
K
\int_{t_n}^{t_{n+1}}
\int_{t_n}^{t}
ds\,dt\sim
\mathcal{O}\!\left(\frac{1}{N^2}\right).
\end{align}
	The remaining terms are handled similarly; full details are given in Appendix~A, where we show using~\hyperref[lemma2]{Lemma~\ref*{lemma2}} that all terms are $\mathcal{O}(1/N^2)$. Hence $\mathbb{E}^x[|I^n_1|]\sim\mathcal{O}(1/N^2)$.

	For the second estimate $I^n_2$, we use $|\partial_x v(x,t)|\leq Q(x)$ from~\hyperref[proposition1]{Proposition~\ref*{proposition1}}. As shown in~\hyperref[lemma1]{Lemma~\ref*{lemma1}}, the last term of $g(x,t;t_n)$ is $\mathcal{O}(t-t_n)$; we separate this term and apply Ito's formula to the remainder:

\begin{align}
&\left|
\int_{t_n}^{t_{n+1}}
\partial_x v(\hat X_t,t)\,
g(\hat X_t,t;t_n)\,
\frac{\sigma(\hat X_t)^2}{F^2(\hat X_t,t;t_n)}
\,dt
\right|\le
\int_{t_n}^{t_{n+1}}
Q(\hat X_t)\,
|g(\hat X_t,t;t_n)|\,
\frac{\sigma(\hat X_t)^2}{F^2(\hat X_t,t;t_n)}
\,dt
\nonumber\\
&\le
\int_{t_n}^{t_{n+1}}
\Bigg[
Q(\hat X_t)(t-t_n)
\left( \frac{k}{2\theta^2}+\frac{(4\alpha-\theta^2)^2}{32\theta^2\hat X^2_t}
%\frac{(4\alpha-\theta^2)/16+\hat X_t}
%     {2\theta^2\hat X_t}
\right)
\frac{\sigma(\hat X_t)^2}{F^2(\hat X_t,t;t_n)}+\int_{t_n}^{t}
Q(\hat X_s)\partial_s\mathcal Z(\hat X_s,s;t_n)\,ds
\nonumber\\
&\qquad
+\int_{t_n}^{t}
\bigl(
\partial_xQ(\hat X_s)\mathcal Z(\hat X_s,s;t_n)
+Q(\hat X_s)\partial_x\mathcal Z(\hat X_s,s;t_n)
\bigr)\,d\hat X_s
\nonumber\\
&\qquad
+\int_{t_n}^{t}
\Bigl(
\partial_{xx}Q(\hat X_s)\mathcal Z(\hat X_s,s;t_n)
+Q(\hat X_s)\partial_{xx}\mathcal Z(\hat X_s,s;t_n)
\nonumber\\
&\hspace{6em}
+2\partial_xQ(\hat X_s)\partial_x\mathcal Z(\hat X_s,s;t_n)
\Bigr)
\frac{\sigma(\hat X_s)^2}
     {2F^2(\hat X_s,s;t_n)}
\,ds
\Bigg]dt .
\end{align}

%	\begin{align}
%		&\left|\int_{t_n}^{t_{n+1}}\partial_x v(\hat{X}_t,t)g(\hat{X}_t,t;t_n){\sigma(\hat{X}_t)^2\over F^2(\hat{X}_t,t;t_n)}dt\right|\leq\int_{t_n}^{t_{n+1}}Q(\hat{X}_t)|g(\hat{X}_t,t;t_n)|{\sigma(\hat{X}_t)^2\over F^2(\hat{X}_t,t;t_n)}dt\nonumber \\
%		&\leq\int_{t_n}^{t_{n+1}}\Bigg[Q(\hat{X}_t)(t-t_n)\!\left({(4\alpha-\theta^2)/16+\hat{X}_t\over 2\theta^2\hat{X}_t}\right){\sigma(\hat{X}_t)^2\over F^2(\hat{X}_t,t;t_n)}+\int_{t_n}^t Q(\hat{X}_s)\partial_s\mathcal{Z}(\hat{X}_s,s;t_n)\,ds \nonumber \\
%		&\quad +\int_{t_n}^t\partial_x Q(\hat{X}_s)\mathcal{Z}(\hat{X}_s,s;t_n)\,d\hat{X}_s+\int_{t_n}^t Q(\hat{X}_s)\partial_x\mathcal{Z}(\hat{X}_s,s;t_n)\,d\hat{X}_s \nonumber \\
%		&\quad +\int_{t_n}^t\partial_{xx}Q(\hat{X}_s)\mathcal{Z}(\hat{X}_s,s;t_n){\sigma^2(\hat{X}_s)\over 2F^2(\hat{X}_s,s;t_n)}ds \nonumber \\
%		&\quad +\int_{t_n}^t Q(\hat{X}_s)\partial_{xx}\mathcal{Z}(\hat{X}_s,s;t_n){\sigma(\hat{X}_s)^2\over 2F^2(\hat{X}_s,s;t_n)}ds \nonumber\\
%		&\quad +\int_{t_n}^t\partial_x% Q(\hat{X}_s)\partial_x\mathcal{Z}(\hat{X}_s,s;t_n)%{\sigma^2(\hat{X}_s)\over %2F^2(\hat{X}_s,s;t_n)}ds\Bigg]dt,
%	\end{align}
where
\begin{align}\label{aux}
\mathcal{Z}(x,t;t_n)
&:=
\tilde g(x,t;t_n)
\frac{\sigma^2(x)}
     {F^2(x,t;t_n)},
\\
\tilde g(x,t;t_n)
&:=
\frac{1}{2x}
-\frac{2(2+k(t-t_n))}
       {2x(2+k(t-t_n))
        +(4\alpha-\theta^2)(t-t_n)}.
\end{align}

%	where $\mathcal{Z}(x,t;t_n):=\tilde{g}(x,t;t_n){\sigma^2(x)\over F^2(x,t;t_n)}$ with $\tilde{g}(x,t;t_n):=\frac{1}{2x}-\frac{2+k(t-t_n)}{2x(2+k(t-t_n))+(4\alpha-\theta^2)(t-t_n)}$.
%    \\
%    \\
    We also used the triangle inequality together with $\tilde{g}(x,t;t_n)\geq 0$ and $g(x,t_n;t_n)=0$.

	The $\mathcal{O}(1/N^2)$ bound for the first term on the right-hand side follows from~\hyperref[lemma2]{Lemma~\ref*{lemma2}}. The remaining terms are shown in Appendix~A to admit finite bounds so $\mathbb{E}^x[|I^n_2|]\sim\mathcal{O}(1/N^2)$.

	Combining the estimates in~\eqref{eq:Th1.1} gives
	\begin{equation}
		|\mathbb{E}^x[f(X_T)]-\mathbb{E}^x[f(\hat{X}_T)]|\sim\mathcal{O}\!\left({1\over N}\right).
	\end{equation}
\end{proof}

\section{Appendix}
We collect here all the bounds used to estimate $I^n_1$ and $I^n_2$ in~\hyperref[T1.1]{Theorem~\ref*{T1.1}}.

\subsection{Calculations for $I^n_1$}
We begin with
\begin{equation}
	F_t(x,t;t_n)={4\alpha-\theta^2\over 8x}+{k\over 2}\Rightarrow{F_t(x,t;t_n)\over F(x,t;t_n)}\leq{4\alpha-\theta^2\over 8x}+{k\over 2}. \nonumber
\end{equation}

Applying~\hyperref[lemma2]{Lemma~\ref*{lemma2}} gives
\begin{equation}
	\mathbb{E}\!\left[\int_{t_n}^{t_{n+1}}\!\int_{t_n}^t\left|P(\hat{X}_s)\partial_s\Psi(\hat{X}_s,s;t_n)\right|ds\,dt\right]\sim\mathcal{O}\!\left({1\over N^2}\right). \nonumber
\end{equation}

Next,
\begin{equation}
	\partial_x\Psi(x,t;t_n)=\partial_x\!\left(1-{1\over F^2(x,t;t_n)}\right)=\frac{128(t-t_n)x\left(\theta^2-4\alpha\right)}{\left[(4\alpha-\theta^2)(t-t_n)+4x(2+k(t-t_n))\right]^3}. \nonumber
\end{equation}

Using the triangle inequality and $4\alpha>\theta^2$, we obtain
\begin{equation}\label{App1eq}
	\left|\partial_x\Psi(x,t;t_n)\right|\leq{K\over t-t_n}~~\text{and}~~\left|\partial_x\Psi(x,t;t_n)\right|\leq{K\over x}, 
\end{equation}
where $K$ depends only on $\alpha$, $\theta$, $T$.

These two inequalities together allow us to control the term
$\partial_x\Psi(x,t;t_n)
\bigl(g(x,t;t_n)+\frac{b(x)}{\sigma^2(x)}\bigr)
\sigma^2(x)$. The first inequality in~\eqref{App1eq} controls the $(t-t_n)/x^2$ term in $g$, while the second controls the remaining terms. Using only the first would give a divergent integral of $1/(t-t_n)$, while using only the second would leave a $1/x^2$ term whose expectation diverges unless we further impose $\alpha>\theta^2$. Using these inequalities, we obtain
\begin{align}
	&\left|\partial_x\Psi(x,t;t_n)\,{(4\alpha-\theta^2)^2(t-t_n)\over 32\theta^2 x^2}\right|\leq{K\over x^2}, \nonumber \\
	&\left|\partial_x\Psi(x,t;t_n)\!\left({b(x)\over\sigma^2(x)}+g(x,t;t_n)-{(4\alpha-\theta^2)^2(t-t_n)\over 32\theta^2 x^2}\right)\right|\leq K\!\left({1\over x}+{1\over x^2}\right). \nonumber
\end{align}
Combining these, we obtain an integrand of the form $K(1+{1\over x}){1\over F^2(x,t;t_n)}$, which ensures that
\begin{equation}
	\mathbb{E}\!\left[\int_{t_n}^{t_{n+1}}\!\int_{t_n}^t\left|P(\hat{X}_s)\partial_x\Psi(\hat{X}_s,s;t_n)\!\left(g(\hat{X}_s,s;t_n)+{b(\hat{X}_s)\over\sigma^2(\hat{X}_s)}\right)\!{\sigma^2(\hat{X}_s)\over F^2(\hat{X}_s,s;t_n)}\right|ds\,dt\right]\sim\mathcal{O}\!\left({1\over N^2}\right). \nonumber
\end{equation}

The next term of concern is
\begin{equation}
	\partial_{xx}\Psi(x,t;t_n)=128(t-t_n)(4\alpha-\theta^2)\,{8x(1+2k(t-t_n))-(t-t_n)(4\alpha-\theta^2)\over(4x(1+2k(t-t_n))+(t-t_n)(4\alpha-\theta^2))^4}. \nonumber
\end{equation}
Elementary algebra gives
\begin{equation}
	\left|\partial_{xx}\Psi(x,t;t_n)\right|\leq{128(t-t_n)(4\alpha-\theta^2)\over(4x(1+2k(t-t_n))+(t-t_n)(4\alpha-\theta^2))^3}\leq{K\over x^2}, \nonumber
\end{equation}
and applying the same arguments yields
\begin{equation}
	\mathbb{E}\!\left[\int_{t_n}^{t_{n+1}}\!\int_{t_n}^t\left|P(\hat{X}_s)\partial_{xx}\Psi(\hat{X}_s,s;t_n){\sigma^2(\hat{X}_s)\over 2F^2(\hat{X}_s,s;t_n)}\right|ds\,dt\right]\sim\mathcal{O}\!\left({1\over N^2}\right). \nonumber
\end{equation}

Finally, we verify that the local martingale terms in~\eqref{eqn:I1},
\begin{align}
	&\int_{t_n}^t P(\hat{X}_s)\partial_x\Psi(\hat{X}_s,s;t_n){\sigma(\hat{X}_s)\over F(\hat{X}_s,s;t_n)}\,dW_s ~&\text{and}~~\int_{t_n}^t\partial_x P(\hat{X}_s)\Psi(\hat{X}_s,s;t_n){\sigma(\hat{X}_s)\over F(\hat{X}_s,s;t_n)}\,dW_s, \nonumber
\end{align}
are true martingales and hence have vanishing expectations. Indeed,
\begin{align}
	&\mathbb{E}^x\!\left[\int_{t_n}^t\!\left(P(\hat{X}_s)\partial_x\Psi(\hat{X}_s,s;t_n){\sigma(\hat{X}_s)\over F(\hat{X}_s,s;t_n)}\right)^2 ds\right]\leq K\mathbb{E}^x\!\left[\int_{t_n}^t{P(\hat{X}_s)/\hat{X}_s\over F^2(\hat{X}_s,s;t_n)}ds\right]<\infty, \nonumber \\
	&\mathbb{E}^x\!\left[\int_{t_n}^t\!\left(\partial_x P(\hat{X}_s)\Psi(\hat{X}_s,s;t_n){\sigma(\hat{X}_s)\over F(\hat{X}_s,s;t_n)}\right)^2 ds\right]\leq K\int_{t_n}^t\mathbb{E}^x[\partial_x P(\hat{X}_s)\hat{X}_s]\,ds<\infty, \nonumber
\end{align}
where for the first quadratic variation we used the second inequality in~\eqref{App1eq}.

Using the same techniques, the remaining terms of~\eqref{eqn:I1} are shown to also be $\mathcal{O}(1/N^2)$ 

\subsection{Calculations for $I^n_2$}
Since $|\mathcal{Z}(x,t;t_n)|=\tilde{g}(x,t;t_n){\sigma^2(x)/F^2(x,t;t_n)}\leq K(1+x)$ and $|g(x,t;t_n)+{b(x)/\sigma^2(x)}|\leq K(1+{1\over x}+{t-t_n\over x^2})$, their product satisfies
\begin{equation}
	\left|\mathcal{Z}(x,t;t_n)\!\left(g(x,t;t_n)+{b(x)\over\sigma^2(x)}\right)\!{\sigma^2(x)\over F^2(x,t;t_n)}\right|\leq K\!\left(1+{1\over x}+x+x^2\right). \nonumber
\end{equation}
Hence
\begin{equation}
	\mathbb{E}\!\left[\int_{t_n}^{t_{n+1}}\!\int_{t_n}^t\left|\partial_x Q(\hat{X}_s)\mathcal{Z}(\hat{X}_s,s;t_n){\sigma^2(\hat{X}_s)\over F^2(\hat{X}_s,s;t_n)}\!\left(g(\hat{X}_s,s;t_n)+{b(\hat{X}_s)\over\sigma^2(\hat{X}_s)}\right)\right|ds\,dt\right]\sim\mathcal{O}\!\left({1\over N^2}\right). \nonumber
\end{equation}

Similarly, a direct computation for $\partial_x Z(x,t;t_n)$ and $\partial_t Z(x,t;t_n)$ using \eqref{aux} gives

\begin{equation}
\left|\partial_t\mathcal{Z}(x,t;t_n)\right|\leq K\!\left({1\over x}+1+x\right){1\over F^2(x,t;t_n)} ~~\text{and}~~ \left|\partial_x\mathcal{Z}(x,t;t_n)\right|\leq K\!\left(1+{1\over x}\right)     \nonumber
\end{equation}

%\begin{align}
%\partial_t\mathcal Z(x,t;t_n)&=
%\frac{\theta^2}{x}
%\frac{1}{F^2(x,t;t_n)}
%\Bigg(
%-\frac{2}{F^2(x,t;t_n)}
%+\frac{(2+(t-t_n)k)(4\alpha-\theta^2+4kx)}
%       {F^2(x,t;t_n)}
%-\frac{32k^2}{\theta^2}x^2
%\Bigg)
%\nonumber\\
%&\quad
%+\frac{\theta^2(4\alpha-\theta^2+4kx)}
%       {8F^3(x,t;t_n)}
%\Bigg(
%\frac{2+(t-t_n)k}
%     {xF(x,t;t_n)}
%+\frac{(t-t_n)k^2}{\theta^2}
%-\frac{1}{x}
%\Bigg).
%\end{align}

%\begin{align}
%	\partial_t\mathcal{Z}(x,t;t_n)=\partial_t\!\left(\tilde{g}(x,t;t_n){\sigma^2(x)\over F^2(x,t;t_n)}\right)&={\theta^2\over x}\,{1\over F^2(x,t;t_n)}\!\left(-{2\over F^2(x,t;t_n)}+{(2+(t-t_n)k)(4\alpha-\theta^2+4kx)\over F^2(x,t;t_n)}-{32k^2\over\theta^2}x^2\right) \nonumber \\
%	&\quad+{\theta^2(4\alpha-\theta^2+4kx)\over 8F^3(x,t;t_n)}\!\left({2+(t-t_n)k\over xF(x,t;t_n)}+{(t-t_n)k^2\over\theta^2}-{1\over x}\right). \nonumber
%\end{align}
%This can be bounded as
%\begin{equation}
%	\left|\partial_t\mathcal{Z}(x,t;t_n)\right|\leq K\!\left({1\over x}+1+x\right){1\over F^2(x,t;t_n)},
%\end{equation}
so that
\begin{equation}
	\mathbb{E}\!\left[\int_{t_n}^{t_{n+1}}\!\int_{t_n}^t Q(\hat{X}_s)\left|\partial_s\mathcal{Z}(\hat{X}_s,s;t_n)\right|ds\,dt\right]\sim\mathcal{O}\!\left({1\over N^2}\right). \nonumber
\end{equation}

%The next term is bounded by
%\begin{equation}
%	\left|\partial_x\mathcal{Z}(x,t;t_n)\!\left(g(x,t;t_n)+{b(x)\over\sigma^2(x)}\right)\!{\sigma^2(x)\over %F^2(x,t;t_n)}\right|\leq K\!\left(1+{1\over x}+x\right), \nonumber
%\end{equation}
\begin{equation}
	\mathbb{E}\!\left[\int_{t_n}^{t_{n+1}}\!\int_{t_n}^t\left|Q(\hat{X}_s)\partial_x\mathcal{Z}(\hat{X}_s,s;t_n)\!\left(g(\hat{X}_s,s;t_n)+{b(\hat{X}_s)\over\sigma^2(\hat{X}_s)}\right)\!{\sigma^2(\hat{X}_s)\over F^2(\hat{X}_s,s;t_n)}\right|ds\,dt\right]\sim \mathcal{O}(\frac{1}{N^2}) \nonumber
\end{equation}

The remaining terms are handled similarly. It remains to verify that the corresponding local martingales are true martingales, hence $I^n_2\sim\mathcal{O}(1/N^2)$. This follows from the bounds
\begin{align}
	&\mathbb{E}^x\!\left[\int_{t_n}^t\!\left(Q(\hat{X}_s)\partial_x\mathcal{Z}(\hat{X}_s,s;t_n){\sigma(\hat{X}_s)\over F(\hat{X}_s,s;t_n)}\right)^2 ds\right]<\infty, \nonumber \\
	&\mathbb{E}^x\!\left[\int_{t_n}^t\!\left(\partial_x Q(\hat{X}_s)\mathcal{Z}(\hat{X}_s,s;t_n){\sigma(\hat{X}_s)\over F(\hat{X}_s,s;t_n)}\right)^2 ds\right]<\infty. \nonumber
\end{align}

%\enlargethispage{20pt}

%%%%%%%%%% Insert bibliography here %%%%%%%%%%%%%%

\end{document}